\documentclass[11pt]{article}
\usepackage[T1]{fontenc}
\usepackage[margin=1in]{geometry}
\usepackage[colorlinks=true,linkcolor=blue,citecolor=blue,urlcolor=blue]{hyperref}

\usepackage{amssymb}
\usepackage{amsmath}
\usepackage{amsthm}

\newtheorem{theorem}{Theorem}
\newtheorem{corollary}[theorem]{Corollary}
\newtheorem{definition}[theorem]{Definition}
\begin{document}

\title{A finite-sample Borel--Cantelli inequality under $m$-dependence}
\author{Chatchawan Panraksa\\
Applied Mathematics Program, Mahidol University International College\\
999 Phutthamonthon 4 Road, Salaya, Nakhonpathom 73170, Thailand\\
\texttt{chatchawan.pan@mahidol.ac.th}}
\date{}

\maketitle

%% Abstract
\begin{abstract}
We prove an explicit finite-sample version of the Borel--Cantelli
lemma under $m$-dependence.  Given any $m$-dependent sequence of
events $(A_k)_{1\leq k\leq N}$, we show that
\[
  \mathbb{P}\Bigl(\bigcup_{k=1}^N A_k\Bigr)
  \ge 1 - \exp\Bigl(-\frac{1}{m+1}
    \sum_{k=1}^{N} \mathbb{P}(A_k)\Bigr).
\]
The proof splits the index set into residue classes modulo $m+1$, so
that each class consists of mutually independent events, and then
applies an elementary product--to--exponential bound.  We further
derive a quantitative windowed corollary: if the partial sums satisfy
\(\sum_{k=1}^{\phi(n)}\mathbb{P}(A_k)\ge n\) for all \(n\ge1\), then for
every \(N\ge1\) and \(i\ge0\),
\[
  \mathbb{P}\Bigl(\bigcup_{k=i+1}^{\phi(i+N)} A_k\Bigr)
  \ge 1-\exp\Bigl(-\frac{N}{m+1}\Bigr).
\]
Finally, we present a complementary second-order refinement involving
local pairwise intersection probabilities.  These results complement
the asymptotic and rate results of Lu, Shi and Zhao (2026) by
providing explicit finite-$N$ bounds and a simple comparison framework
for the baseline and second-order estimates.
\end{abstract}

\medskip
\noindent\textbf{Keywords:}\\
Borel--Cantelli lemma; $m$-dependence; quantitative probability;
non-asymptotic bounds

\smallskip
\noindent\textbf{MSC 2020:} 60F15, 60F20

\section{Introduction}
The Borel--Cantelli lemma is a cornerstone of probability theory.  In
its classical form it asserts that, for a sequence of events
\((A_k)_{k\geq 1}\) in a probability space, if
\(\sum_k \mathbb{P}(A_k)<\infty\) then only finitely many of the
events occur almost surely, whereas if the events are mutually
independent and the series diverges then infinitely many of them
occur almost surely.  See, for example, Durrett~\cite{Durrett2010}
for a textbook treatment.  Since independence is often an idealised
assumption, a substantial body of work has explored relaxations of
independence.  Notable examples include the sharp criteria of
Erd\H{o}s and R\'enyi~\cite{ErdosRenyi1959}, Ortega and
Wschebor~\cite{OrtegaWschebor1984}, and the quantitative bounds of
Chandra~\cite{Chandra1999} and Petrov~\cite{Petrov2002}; see
Arthan and Oliva~\cite{ArthanOliva2021} for a recent survey.

Recently Lu, Shi and Zhao~\cite{LuShiZhao2026} studied the second
Borel--Cantelli lemma under $m$-dependence.  A sequence of events
\((A_k)_{k\geq 1}\) is called \emph{$m$-dependent} if any two
subcollections of events separated by more than $m$ indices are
independent.  Their main result (Theorem~4 in
\cite{LuShiZhao2026}) shows that if \(\sum_{k=1}^\infty
\mathbb{P}(A_k)=\infty\) then \(\limsup_{k\to\infty} A_k\) has
probability one; Theorem~7 of the same paper gives a quantitative rate
version when the partial sums grow linearly, and the block/parity
reduction in that setting leads to the coefficient \(1/(2m)\), which
we mention here for later comparison.  These results are
asymptotic in $N$ and concern almost-sure eventual occurrence, whereas
the present note gives explicit lower bounds for finite unions.  In
particular, Theorem~\ref{thm:finite} produces a concrete coefficient
\(1/(m+1)\) for
\(\mathbb{P}(\bigcup_{k=1}^N A_k)\), and Corollary~\ref{cor:window}
turns the same estimate into a finite-window statement.

Classical references such as Erd\H{o}s--R\'enyi~\cite{ErdosRenyi1959},
Ortega--Wschebor~\cite{OrtegaWschebor1984}, Chandra~\cite{Chandra1999},
and Petrov~\cite{Petrov2002} provide important context for refinements
of Borel--Cantelli-type arguments, but the emphasis there is different
from the one adopted here.  The novelty of the present note lies in
recording an explicit finite-$N$ lower bound under $m$-dependence with
a simple residue-class proof, together with a windowed corollary and a
second-order local-overlap variant that can be compared directly with
the baseline estimate.

The aim of this note is to provide simple finite-sample analogues with
explicit constants.  Our first theorem gives a non-asymptotic lower
bound on the probability that at least one of $N$ $m$-dependent
events occurs, namely
\[
  \mathbb{P}\Bigl(\bigcup_{k=1}^N A_k\Bigr)
  \ge 1-\exp\Bigl(-\frac{1}{m+1}\sum_{k=1}^N \mathbb{P}(A_k)\Bigr).
\]
The key observation is that the index set \(\{1,\dots,N\}\) can be
split into \(m+1\) residue classes modulo \(m+1\), and the events in
each class are mutually independent.  Applying the elementary bound
\(\prod_j(1-x_j)\le \exp(-\sum_j x_j)\) within each class and then
selecting a class with maximal total mass yields the stated estimate.

In addition to this baseline bound, we present a complementary
second-order inequality obtained from Bonferroni estimates on shifted
block partitions.  The resulting exponent is expressed in terms of
local intersection probabilities \(\mathbb{P}(A_i\cap A_j)\) for
pairs with \(|i-j|\le m-1\).  The comparison carried out after
Theorem~\ref{thm:cov} shows exactly when this second-order bound
improves on the baseline estimate.  Finally, we deduce a finite-window
corollary that
parallels the rate result of Lu, Shi and Zhao~\cite{LuShiZhao2026}
while retaining explicit constants.

\section{Preliminaries}
We recall some basic definitions and a simple exponential bound.

\begin{definition}[\boldmath$m$-dependence]
\label{def:mdep}
Let $(A_k)_{k\ge1}$ be a sequence of events on a probability space.
For finite index sets $I,J\subseteq\mathbb{N}$, denote the distance
between $I$ and $J$ by \(\mathrm{dist}(I,J)=\inf\{|i-j|:i\in I,
j\in J\}.\)  The sequence is called $m$-dependent if whenever
\(\mathrm{dist}(I,J)>m\) the sigma-algebras generated by
\(\{A_i:i\in I\}\) and \(\{A_j:j\in J\}\) are independent.  In
particular, events $A_i$ and $A_j$ are independent whenever
\(|i-j|>m\).
\end{definition}

An immediate consequence is that if \(J\subseteq\mathbb{N}\) satisfies
\(|i-j|>m\) for all distinct \(i,j\in J\), then the family
\(\{A_k:k\in J\}\) is mutually independent.  Indeed, one may apply
Definition~\ref{def:mdep} inductively to the sigma-algebra generated by
the previously selected indices.  In particular, for each residue class
\(r\in\{1,\dots,m+1\}\), the events with indices congruent to
\(r\pmod{m+1}\) are mutually independent.

\medskip
\noindent\textbf{Product-to-exponential bound.}
For any integer \(L\ge1\) and any numbers \(0\le x_1,\dots,x_L\le1\),
\[
  \prod_{j=1}^L (1 - x_j) \leq \exp\bigl(-\sum_{j=1}^L x_j\bigr).
\]
This follows immediately from the elementary inequality
\(\log(1-x)\le -x\) for \(x\in[0,1]\).

We shall use this standard bound repeatedly to convert finite products
of probabilities into exponentials of sums.

\section{Main results}

Our first theorem is a finite-sample version of the second
Borel--Cantelli lemma under $m$-dependence.

\begin{theorem}[Finite-sample $m$-dependence inequality]
\label{thm:finite}
Let $m\ge 1$ be an integer and let $(A_k)_{1\le k\le N}$ be a finite
sequence of $m$-dependent events.  Write
\[
  S_N=\sum_{k=1}^{N} \mathbb{P}(A_k).
\]
Then
\[
  \mathbb{P}\Bigl(\bigcup_{k=1}^N A_k\Bigr)
  \ge 1 - \exp\Bigl(- \frac{1}{m+1}\,S_N\Bigr).
\]
\end{theorem}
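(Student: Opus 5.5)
The plan is to reduce to the independent case by restricting attention to a single residue class of indices. For each $r\in\{1,\dots,m+1\}$ let
\[
  J_r=\{k\in\{1,\dots,N\}: k\equiv r \pmod{m+1}\},
  \qquad T_r=\sum_{k\in J_r}\mathbb{P}(A_k).
\]
Distinct indices in $J_r$ differ by at least $m+1>m$. By the remark following Definition~\ref{def:mdep}, the family $\{A_k:k\in J_r\}$ is therefore mutually independent. Since the $J_r$ partition $\{1,\dots,N\}$, we have $\sum_{r=1}^{m+1}T_r=S_N$. By pigeonhole, some $r^*$ satisfies $T_{r^*}\ge S_N/(m+1)$.

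Next I would use monotonicity and independence on this class. Since $\bigcup_{k\in J_{r^*}}A_k\subseteq\bigcup_{k=1}^N A_k$, it suffices to bound the smaller union from below. Mutual independence of the events passes to their complements, because the generated sigma-algebras are independent. This gives
\[
  \mathbb{P}\Bigl(\bigcup_{k\in J_{r^*}}A_k\Bigr)
  =1-\prod_{k\in J_{r^*}}\bigl(1-\mathbb{P}(A_k)\bigr).
\]
Applying the product-to-exponential bound with $x_k=\mathbb{P}(A_k)\in[0,1]$ yields the lower bound $1-\exp(-T_{r^*})$. Because $t\mapsto 1-e^{-t}$ is increasing, this is at least $1-\exp(-S_N/(m+1))$.

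Two minor points need checking. First, some $J_r$ may be empty when $N<m+1$. The selected class cannot be empty unless $S_N=0$, and in that case the bound is the trivial statement $\mathbb{P}(\cdot)\ge 0$. Second, one must confirm that mutual independence really follows from Definition~\ref{def:mdep}. The paper has already asserted this via induction: the sigma-algebra of the previously chosen indices and the next index are at distance $>m$. I would cite that remark rather than reprove it.

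I expect no real obstacle. The only step deserving care is passing from independence of the events to the product formula for complements, which is standard. An alternative to the pigeonhole step is to average: the product over all $N$ complements is bounded by the best class, and the best class is at least as good as the average over classes.
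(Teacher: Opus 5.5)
Your proposal is correct and follows essentially the same route as the paper. It uses the residue classes modulo $m+1$, mutual independence within each class, the product-to-exponential bound, and pigeonhole selection of a class with mass at least $S_N/(m+1)$; the paper phrases your monotonicity step as bounding $\mathbb{P}\bigl(\bigcap_{k=1}^N A_k^{\mathrm c}\bigr)$ by the minimum over classes before choosing the class.
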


\begin{proof}
For each residue class $r\in\{1,\dots,m+1\}$, define
\[
  J_r=\{\,k\in\{1,\dots,N\}: k\equiv r \pmod{m+1}\,\}.
\]
If $i,j\in J_r$ with $i\ne j$, then $|i-j|$ is a nonzero multiple of
$m+1$, hence $|i-j|\ge m+1>m$.  Therefore the events
$\{A_k:k\in J_r\}$ are mutually independent for each fixed $r$.

By De Morgan's law,
\[
  \mathbb{P}\Bigl(\bigcup_{k=1}^N A_k\Bigr)
  =1-\mathbb{P}\Bigl(\bigcap_{k=1}^N A_k^{\mathrm c}\Bigr)
  =1-\mathbb{P}\Bigl(\bigcap_{r=1}^{m+1}\ \bigcap_{k\in J_r} A_k^{\mathrm c}\Bigr).
\]
Hence
\[
  \mathbb{P}\Bigl(\bigcap_{k=1}^N A_k^{\mathrm c}\Bigr)
  \le
  \min_{1\le r\le m+1}
  \mathbb{P}\Bigl(\bigcap_{k\in J_r} A_k^{\mathrm c}\Bigr).
\]
Since the events within each $J_r$ are mutually independent, we obtain
\[
  \mathbb{P}\Bigl(\bigcap_{k=1}^N A_k^{\mathrm c}\Bigr)
  \le
  \min_{1\le r\le m+1}
  \prod_{k\in J_r}\bigl(1-\mathbb{P}(A_k)\bigr).
\]
Using $1-x\le e^{-x}$ for $x\in[0,1]$, this gives
\[
  \mathbb{P}\Bigl(\bigcap_{k=1}^N A_k^{\mathrm c}\Bigr)
  \le
  \min_{1\le r\le m+1}
  \exp\Bigl(-\sum_{k\in J_r}\mathbb{P}(A_k)\Bigr).
\]
Because the sets $J_1,\dots,J_{m+1}$ form a partition of
$\{1,\dots,N\}$,
\[
  \sum_{r=1}^{m+1}\sum_{k\in J_r}\mathbb{P}(A_k)=S_N.
\]
Therefore at least one residue class satisfies
\[
  \sum_{k\in J_r}\mathbb{P}(A_k)\ge \frac{S_N}{m+1},
\]
and so
\[
  \mathbb{P}\Bigl(\bigcap_{k=1}^N A_k^{\mathrm c}\Bigr)
  \le \exp\Bigl(-\frac{1}{m+1}S_N\Bigr).
\]
Consequently,
\[
  \mathbb{P}\Bigl(\bigcup_{k=1}^N A_k\Bigr)
  \ge 1-\exp\Bigl(-\frac{1}{m+1}S_N\Bigr),
\]
as claimed.  Empty products are understood as $1$.
\end{proof}

The factor $1/(m+1)$ comes from splitting the index set into $m+1$
residue classes modulo $m+1$.  For $m=1$ this coincides with the
coefficient $1/(2m)$ obtained from the block/parity argument, while for
$m\ge2$ it is strictly sharper.  We state Theorem~\ref{thm:finite} for
$m\ge1$; the independent case $m=0$ is classical and yields
\[
  \mathbb{P}\Bigl(\bigcup_{k=1}^N A_k\Bigr)\ge 1-\exp(-S_N).
\]

\begin{corollary}[Finite-window rate form]
\label{cor:window}
Let $(A_k)_{k\geq1}$ be an $m$-dependent sequence of events and let
\(\phi:\mathbb{N}\to\mathbb{N}\) be a non-decreasing function with
\[
  \sum_{k=1}^{\phi(n)} \mathbb{P}(A_k)\ge n
  \qquad\text{for all } n\ge 1.
\]
Then for every integer $N\ge1$ and all $i\ge0$,
\[
  \mathbb{P}\Bigl( \bigcup_{k=i+1}^{\phi(i+N)} A_k \Bigr)
  \ge 1 - \exp\Bigl(-\frac{N}{m+1}\Bigr).
\]
\end{corollary}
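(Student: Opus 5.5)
The plan is to reduce the statement to Theorem~\ref{thm:finite} applied to the shifted finite family $B_j=A_{i+j}$, $1\le j\le M$, where $M=\phi(i+N)-i$. Two things need checking: that the window is nonempty, and that its total mass is at least $N$. The key (and essentially only) estimate is the lower bound
\[
  \sum_{k=i+1}^{\phi(i+N)}\mathbb{P}(A_k)
  =\sum_{k=1}^{\phi(i+N)}\mathbb{P}(A_k)-\sum_{k=1}^{i}\mathbb{P}(A_k)
  \ge (i+N)-i=N .
\]
Here the hypothesis with $n=i+N\ge1$ bounds the first sum from below. The trivial bound $\mathbb{P}(A_k)\le1$ bounds the subtracted sum by $i$.

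Before writing this subtraction I first verify that $\phi(i+N)\ge i+1$, so that the window is a genuine nonempty interval. Suppose instead $\phi(i+N)\le i$. Then the same trivial bound gives
\[
  \sum_{k=1}^{\phi(i+N)}\mathbb{P}(A_k)\le \phi(i+N)\le i<i+N,
\]
which contradicts the hypothesis. I note that monotonicity of $\phi$ is not actually needed for this argument.

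Next I observe that the shifted family $(B_j)_{1\le j\le M}$ is again $m$-dependent. Definition~\ref{def:mdep} depends only on differences of indices, and translating index sets by $i$ preserves $\mathrm{dist}(I,J)$. Applying Theorem~\ref{thm:finite} to $(B_j)$, whose total mass is $S_M\ge N$, and using that $x\mapsto 1-e^{-x/(m+1)}$ is increasing, gives the claim.

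For completeness I treat the case $m=0$ separately, since Theorem~\ref{thm:finite} was stated for $m\ge1$. In that case the events are independent, and the classical bound $1-\exp(-S_M)$ noted after Theorem~\ref{thm:finite} yields the same conclusion. Alternatively, a $0$-dependent family is also $1$-dependent, but that would only give the weaker coefficient $1/2$, so I would use the direct independent bound. I expect no real obstacle. The only point needing care is the nonemptiness of the window, which is settled by the contradiction argument above.
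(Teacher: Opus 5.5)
Your proposal is correct and matches the paper's proof: the paper also writes the window mass as a difference of partial sums, bounds it below by $(i+N)-i=N$, and applies Theorem~\ref{thm:finite} to the window. Your extra checks are points the paper leaves implicit, and you handle them correctly: nonemptiness of the window via $\phi(i+N)\ge i+1$, translation invariance of $m$-dependence, and the separate $m=0$ case.
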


\begin{proof}
Fix $i\ge0$ and $N\ge1$, and set
\[
  I_{i,N}=\{i+1,i+2,\dots,\phi(i+N)\}.
\]
Then
\[
  \sum_{k\in I_{i,N}} \mathbb{P}(A_k)
  =
  \sum_{k=1}^{\phi(i+N)} \mathbb{P}(A_k)
  -
  \sum_{k=1}^{i} \mathbb{P}(A_k).
\]
By assumption,
\[
  \sum_{k=1}^{\phi(i+N)} \mathbb{P}(A_k)\ge i+N.
\]
Also, since \(\mathbb{P}(A_k)\le 1\) for every \(k\),
\[
  \sum_{k=1}^{i} \mathbb{P}(A_k)\le i.
\]
Therefore
\[
  \sum_{k\in I_{i,N}} \mathbb{P}(A_k)\ge N.
\]
The family \(\{A_k:k\in I_{i,N}\}\) remains $m$-dependent.  Applying
Theorem~\ref{thm:finite} to this family, we obtain
\[
  \mathbb{P}\Bigl(\bigcup_{k\in I_{i,N}} A_k\Bigr)
  \ge 1-\exp\Bigl(-\frac{N}{m+1}\Bigr),
\]
which is exactly the stated bound.
\end{proof}

The bound in Corollary~\ref{cor:window} mirrors the rate result in
Theorem~7 of Lu, Shi and Zhao~\cite{LuShiZhao2026}, but now the
constant \(1/(m+1)\) and the window length are explicit.

\subsection{Second-order correction via shifted blocks}
Although Theorem~\ref{thm:finite} follows from a direct splitting of the
index set into $m+1$ independent residue classes, it is sometimes useful to
have an alternative bound that depends explicitly on \emph{local} pairwise
overlaps.  The next result is a second-order (Bonferroni-type) inequality
obtained by averaging over shifted block partitions.  Depending on the size
of the local intersection terms, the resulting exponent may be sharper or
weaker than that of Theorem~\ref{thm:finite}.

\begin{theorem}[Local-intersection inequality]\label{thm:cov}
Under the assumptions of Theorem~\ref{thm:finite}, we have
\begin{equation}\label{eq:cov-refined}
  \mathbb{P}\Bigl(\bigcup_{k=1}^N A_k\Bigr)
  \;\ge\;
  1-\exp\Biggl(
   -\frac{1}{2}\sum_{k=1}^N \mathbb{P}(A_k)
   \;+\;\frac{1}{2}\!\!\sum_{\substack{1\le i<j\le N\\|i-j|\le m-1}}
   \mathbb{P}(A_i\cap A_j)
  \Biggr).
\end{equation}
\end{theorem}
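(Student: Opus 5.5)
The plan is to partition $\{1,\dots,N\}$ into consecutive blocks of length $m$ rather than into residue classes, and to split the blocks into two alternating families. Let $B_t=\{(t-1)m+1,\dots,tm\}\cap\{1,\dots,N\}$ for $t=1,\dots,T$ with $T=\lceil N/m\rceil$; the last block may be shorter. Write $\mathcal O$ for the odd-indexed blocks and $\mathcal E$ for the even-indexed blocks. Two distinct blocks in the same family are separated by at least one full block of length $m$, so any $i,j$ lying in different blocks of one family satisfy $|i-j|\ge m+1>m$. Applying Definition~\ref{def:mdep} inductively, exactly as in the remark following it, the unions $U_t=\bigcup_{k\in B_t}A_k$ for $t$ odd are mutually independent, and likewise for $t$ even. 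The key point is that independence is now needed only between the $\sigma$-algebras generated by whole blocks, which is what the definition provides.

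Next, bound each block by Bonferroni's inequality:
\[
\mathbb{P}(U_t)\ \ge\ L_t:=\sum_{k\in B_t}\mathbb{P}(A_k)-\sum_{\substack{i<j\\ i,j\in B_t}}\mathbb{P}(A_i\cap A_j).
\]
Any pair inside $B_t$ has $|i-j|\le m-1$. Since $0\le 1-\mathbb{P}(U_t)\le 1-L_t\le e^{-L_t}$, and $1-x\le e^{-x}$ holds for every real $x$, the bound remains valid even when $L_t<0$. This is the one point requiring care, because the product-to-exponential bound in the preliminaries was stated only for $x\in[0,1]$. Independence within each family then gives
\[
\mathbb{P}\Bigl(\bigcap_{k=1}^N A_k^{\mathrm c}\Bigr)\le \prod_{t\ \mathrm{odd}}\bigl(1-\mathbb{P}(U_t)\bigr)\le \exp\Bigl(-\sum_{t\ \mathrm{odd}}L_t\Bigr),
\]
and the same inequality holds with even $t$ in place of odd $t$.

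Taking the geometric mean of the odd and even bounds, or equivalently the minimum of the two, gives
\[
\mathbb{P}\Bigl(\bigcap_{k=1}^N A_k^{\mathrm c}\Bigr)\le \exp\Bigl(-\tfrac12\sum_{t=1}^T L_t\Bigr).
\]
Here $\sum_t L_t=S_N-\sum_t\sum_{i<j\in B_t}\mathbb{P}(A_i\cap A_j)$. The within-block pairs form a subset of all pairs with $|i-j|\le m-1$, and all intersection probabilities are nonnegative, so the within-block sum is at most the full local sum appearing in \eqref{eq:cov-refined}. Substituting this and taking complements yields the theorem.

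As a sanity check, for $m=1$ the blocks are singletons, the pair sum is empty, and the bound reduces to $1-e^{-S_N/2}$, which agrees with Theorem~\ref{thm:finite}. Averaging over the $m$ shifted block partitions, as the section heading suggests, would only tighten the pair term and is not needed for the stated bound. The main obstacle I expect is bookkeeping: verifying the block independence claim carefully, including the truncated last block, and handling possibly negative $L_t$ as described above.
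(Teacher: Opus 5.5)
Your proof is correct and follows the paper's argument, specialized to the single shift $r=0$: length-$m$ blocks, mutual independence of same-parity block unions, the odd/even split with $\min\{e^{-a},e^{-b}\}\le e^{-(a+b)/2}$, second-order Bonferroni inside blocks, and within-block pairs bounded by all pairs with $|i-j|\le m-1$. You are right that the paper's averaging over the $m$ shifted partitions is not needed, because its pair-counting step discards the weights $(m-d)/m$; the paper also applies $1-x\le e^{-x}$ to $\mathbb{P}(B_j)\in[0,1]$ before using Bonferroni, which avoids the negative-$L_t$ case that you handle directly and correctly.
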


\begin{proof}
\textit{Step 1 (Shifted block partitions and $1$-dependence).}\par
For each $r\in\{0,1,\dots,m-1\}$, define the $r$-shifted partition of
$\{1,\dots,N\}$ into disjoint length-$m$ blocks by
\[
\begin{aligned}
  I^{(r)}_{j}
  ={}&\{\,r+(j-1)m+1,\ r+(j-1)m+2,\ \dots,\ r+jm\,\}\\
      &{}\cap\{1,\dots,N\}, \qquad j=0,1,2,\dots,
\end{aligned}
\]
and the corresponding block events
\[
  B^{(r)}_{j}=\bigcup_{k\in I^{(r)}_{j}} A_k.
\]
For $j=0$ the block may be shorter; the sets $(I^{(r)}_{j})_{j\ge0}$ are
disjoint and their union is $\{1,\dots,N\}$.  Moreover, the block sequence
$\bigl(B^{(r)}_{j}\bigr)_{j\ge0}$ is $1$-dependent: if $|j-j'|\ge2$ and
$k\in I^{(r)}_{j}$, $k'\in I^{(r)}_{j'}$, then $|k-k'|\ge m+1>m$, so
$B^{(r)}_{j}$ and $B^{(r)}_{j'}$ are independent by $m$-dependence.

\medskip
\textit{Step 2 (Parity splitting and product-to-exponential).}
Fix $r$.  Let $\mathcal{J}_{\mathrm{odd}}^{(r)}$ and
$\mathcal{J}_{\mathrm{even}}^{(r)}$ be the odd/even block indices.  Since the
blocks within each parity are mutually independent and
\[
  \bigcup_{k=1}^N A_k = \bigcup_{j\ge0} B^{(r)}_{j},
\]
we have
\begin{align*}
\mathbb{P}\Bigl(\bigcap_{k=1}^N A_k^{\mathrm c}\Bigr)
  &= \mathbb{P}\Bigl(\bigcap_{j\ge0}(B^{(r)}_{j})^{\mathrm c}\Bigr)\\
  &\le \min\!\Bigl\{
  \prod_{j\in\mathcal{J}_{\mathrm{odd}}^{(r)}}\bigl(1-\mathbb{P}(B^{(r)}_{j})\bigr),\,
  \prod_{j\in\mathcal{J}_{\mathrm{even}}^{(r)}}\bigl(1-\mathbb{P}(B^{(r)}_{j})\bigr)
  \Bigr\}.
\end{align*}
Using $1-x\le e^{-x}$ and $\min\{e^{-x},e^{-y}\}\le e^{-(x+y)/2}$, we obtain
\begin{equation}\label{eq:parity-exp}
  \mathbb{P}\Bigl(\bigcup_{k=1}^N A_k\Bigr)
  \;\ge\; 1-\exp\Bigl(-\frac{1}{2}\sum_{j\ge0}\mathbb{P}(B^{(r)}_{j})\Bigr)
  \qquad\text{for each }r\in\{0,1,\dots,m-1\}.
\end{equation}

\medskip
\textit{Step 3 (Second-order Bonferroni inside blocks).}
Bonferroni’s second-order inequality yields
\[
  \mathbb{P}\bigl(B^{(r)}_{j}\bigr)
  \ge
  \sum_{i\in I^{(r)}_{j}} \mathbb{P}(A_i)
  -
  \sum_{\substack{i<\ell\\ i,\ell\in I^{(r)}_{j}}} \mathbb{P}(A_i\cap A_\ell).
\]
Summing over $j\ge0$ and averaging over $r=0,\dots,m-1$ gives
\begin{equation}\label{eq:block-avg}
  \frac{1}{m}\sum_{r=0}^{m-1}\ \sum_{j\ge0}\ \mathbb{P}\bigl(B^{(r)}_{j}\bigr)
  \ge
  \sum_{k=1}^N \mathbb{P}(A_k)
  -
  \frac{1}{m}\sum_{r=0}^{m-1}\ \sum_{j\ge0}
  \sum_{\substack{i<\ell\\ i,\ell\in I^{(r)}_{j}}} \mathbb{P}(A_i\cap A_\ell).
\end{equation}

\medskip
\textit{Step 4 (Pair-counting across shifted partitions).}
Fix $1\le i<\ell\le N$ with gap $d=\ell-i$.  The pair $(i,\ell)$ can lie in
a common length-$m$ block only when $d\le m-1$, and in that case this occurs
for exactly $m-d$ values of $r\in\{0,\dots,m-1\}$.  Consequently,
\[
  \frac{1}{m}\sum_{r=0}^{m-1}\ \sum_{j\ge0}
  \sum_{\substack{i<\ell\\ i,\ell\in I^{(r)}_{j}}} \mathbb{P}(A_i\cap A_\ell)
  \le
  \sum_{\substack{1\le i<\ell\le N\\ |i-\ell|\le m-1}} \mathbb{P}(A_i\cap A_\ell).
\]
Plugging this into \eqref{eq:block-avg} yields
\begin{equation}\label{eq:LB-sumB}
  \frac{1}{m}\sum_{r=0}^{m-1}\ \sum_{j\ge0}\ \mathbb{P}\bigl(B^{(r)}_{j}\bigr)
  \ge
  \sum_{k=1}^N \mathbb{P}(A_k)
  -
  \sum_{\substack{1\le i<\ell\le N\\ |i-\ell|\le m-1}} \mathbb{P}(A_i\cap A_\ell).
\end{equation}

\medskip
\textit{Step 5 (From block sums to the exponential bound).}
Let
\[
  X_r:=\sum_{j\ge0}\mathbb{P}(B^{(r)}_{j}).
\]
From \eqref{eq:parity-exp} we have
\[
  \mathbb{P}\Bigl(\bigcup_{k=1}^N A_k\Bigr)
  \ge 1-\exp\Bigl(-\frac{1}{2}X_r\Bigr)
  \qquad\text{for each }r.
\]
Hence
\[
  \mathbb{P}\Bigl(\bigcup_{k=1}^N A_k\Bigr)
  \ge 1-\exp\Bigl(-\frac{1}{2}\max_{0\le r\le m-1} X_r\Bigr)
  \ge 1-\exp\Bigl(-\frac{1}{2m}\sum_{r=0}^{m-1} X_r\Bigr).
\]
Using \eqref{eq:LB-sumB} to lower bound
\(\frac{1}{m}\sum_{r=0}^{m-1} X_r\), we obtain
\[
  \mathbb{P}\Bigl(\bigcup_{k=1}^N A_k\Bigr)
  \ge
  1-\exp\Biggl(
   -\frac{1}{2}\sum_{k=1}^N \mathbb{P}(A_k)
   +\frac{1}{2}\!\!\sum_{\substack{1\le i<\ell\le N\\|i-\ell|\le m-1}}
   \mathbb{P}(A_i\cap A_\ell)
  \Biggr),
\]
which is exactly \eqref{eq:cov-refined}.
\end{proof}

\medskip
\noindent\textbf{Comparison with Theorem~\ref{thm:finite}.}
Let
\[
  T_{m-1}:=\sum_{\substack{1\le i<j\le N\\|i-j|\le m-1}}
  \mathbb{P}(A_i\cap A_j).
\]
The exponent in Theorem~\ref{thm:cov} is \(\frac12(S_N-T_{m-1})\),
whereas Theorem~\ref{thm:finite} gives the exponent \(S_N/(m+1)\).
Hence Theorem~\ref{thm:cov} is sharper than Theorem~\ref{thm:finite}
precisely when
\[
  \frac{1}{2}(S_N-T_{m-1})>\frac{1}{m+1}S_N,
\]
that is,
\[
  T_{m-1}<\frac{m-1}{m+1}S_N.
\]
In particular, when \(m=1\) we have \(T_0=0\), so
Theorem~\ref{thm:cov} and Theorem~\ref{thm:finite} yield the same
exponent \(S_N/2\).

\section{Discussion}
Theorem~\ref{thm:finite} gives an explicit, non-asymptotic Borel--Cantelli
bound under $m$-dependence.  It complements the asymptotic second
Borel--Cantelli result (Theorem~4 in~\cite{LuShiZhao2026}) and the
quantitative rate statement (Theorem~7 in~\cite{LuShiZhao2026}) by
providing a finite-$N$ guarantee with an explicit coefficient,
depending only on the dependence range $m$ and the mass
\[
  S_N=\sum_{k=1}^N \mathbb{P}(A_k).
\]
In particular, Theorem~\ref{thm:finite} should be viewed as a
finite-union estimate rather than as a direct second
Borel--Cantelli theorem.  In contrast to classical relaxations of
independence
(Erd\H{o}s--R\'enyi~\cite{ErdosRenyi1959},
Ortega--Wschebor~\cite{OrtegaWschebor1984}, Chandra~\cite{Chandra1999},
Petrov~\cite{Petrov2002}), the inequality is fully explicit and
requires no limit transitions.

The coefficient \(1/(m+1)\) in Theorem~\ref{thm:finite} arises from the
direct decomposition of \(\{1,\dots,N\}\) into \(m+1\) residue classes
modulo \(m+1\), followed by selection of a class with maximal total
probability mass.  The second-order bound in Theorem~\ref{thm:cov}
provides a complementary inequality in which the exponent is expressed
through the local intersection probabilities \(\mathbb{P}(A_i\cap A_j)\)
for pairs with \(|i-j|\le m-1\).  The comparison above shows that this
second-order estimate is sharper exactly when
\(T_{m-1}<\frac{m-1}{m+1}S_N\), and in particular it does not improve
Theorem~\ref{thm:finite} when \(m=1\).  Finally, the windowed corollary
(Corollary~\ref{cor:window}) turns Theorem~\ref{thm:finite} into an
operational sliding-window statement that parallels the rate viewpoint
of~\cite{LuShiZhao2026} but holds uniformly for every finite~$N$.

\section*{Acknowledgments}
The author gratefully acknowledges support from Mahidol University
International College (MUIC), Thailand, and Research Contract No.~06-2025.
The author sincerely thanks the anonymous referee for a careful reading of the manuscript and for valuable comments and constructive suggestions, which helped improve the presentation and strengthen the results of this paper.


\begin{thebibliography}{00}

\bibitem{Durrett2010}
Durrett, R., 2010. Probability: Theory and Examples, fourth ed.
Cambridge University Press, Cambridge.

\bibitem{ErdosRenyi1959}
Erd\H{o}s, P., R\'enyi, A., 1959. On Cantor's series with convergent
$\sum 1/q_n$. Ann. Univ. Sci. Budapest. E\"otv\"os Sect. Math. 2~(3), 93--109.

\bibitem{OrtegaWschebor1984}
Ortega, J., Wschebor, M., 1984. On the sequence of partial maxima of some
random sequences. Stochastic Process. Appl. 16~(1), 85--98.

\bibitem{Chandra1999}
Chandra, T.K., 1999. A First Course in Asymptotic Theory of Statistics.
Narosa Pub. House, New Delhi.

\bibitem{Petrov2002}
Petrov, V.V., 2002. A note on the Borel--Cantelli lemma.
Statist. Probab. Lett. 58~(3), 283--286.

\bibitem{ArthanOliva2021}
Arthan, R., Oliva, P., 2021. On the Borel--Cantelli Lemmas, the
Erd\H{o}s--R\'enyi theorem, and the Kochen--Stone theorem.
J. Log. Anal. 13~(6), 1--23.

\bibitem{LuShiZhao2026}
Lu, D., Shi, Y., Zhao, J., 2026. The Borel--Cantelli lemma under
$m$-dependence. Statist. Probab. Lett. 227, 110525.

\end{thebibliography}
\end{document}